\providecommand{\U}[1]{\protect\rule{.1in}{.1in}}
\newtheorem{theorem}{Theorem}
\newtheorem{lemma}[theorem]{Lemma}
\newtheorem{proposition}{Proposition}
\newenvironment{proof}[1][Proof]{\noindent\textbf{#1.} }{\ \rule{0.5em}{0.5em}}
\begin{document}

\title{Concomitants and majorization bounds for bivariate distribution function}
\author{Ismihan Bairamov\\Department of Mathematics, Izmir University of Economics \\Izmir, Turkey. \ E-mail: ismihan.bayramoglu@ieu.edu.tr }
\maketitle

\begin{abstract}
Let ($X,Y)$ be a random vector with distribution function $F(x,y),$ and
$(X_{1},Y_{1}),(X_{2},Y_{2}),...,(X_{n},Y_{n})$ are independent copies of
($X,Y).$ Let $X_{i:n}$ be the $i$th order statistics constructed from the
sample $X_{1},X_{2},...,X_{n}$ of the first coordinate of the bivariate sample
and $Y_{[i:n]}$ be the concomitant of $X_{i:n}.$ Denote $F_{i:n}%
(x,y)=P\{X_{i:n}\leq x,Y_{[i:n]}\leq y\}.$ \ Using majorization theory we
write upper and lower bounds for $F$ expressed in terms of mixtures of joint
distributions of order statistics and their concomitants, i.e. $%
{\displaystyle\sum\limits_{i=1}^{n}}
p_{i}F_{i:n}(x,y)$ and $%
{\displaystyle\sum\limits_{i=1}^{n}}
p_{i}F_{n-i+1:n}(x,y).$ \ It is shown that these bounds converge to $F$ \ for
a particular sequence $(p_{1}(m),p_{2}(m),...,p_{n}(m)),m=1,2,..$ as
$m\rightarrow\infty.$

\end{abstract}

\section{Introduction}

Let ($X_{1},Y_{1}),(X_{2},Y_{2}),...,(X_{n},Y_{n})$ be independent and
identically distributed (iid) random vectors with joint distribution function
(cdf) $F(x,y)$ and $X_{1:n}\leq X_{2:n}\leq\cdots\leq X_{n:n}$ be the order
statistics of the sample of first coordinate $X_{1},X_{2},...,X_{n}.$ Denote
the $Y$ -\ variate associated with $X_{i:n}$ by $Y_{[i:n]},$ $i=1,2,..,n$,
i.e. $Y_{[i:n]}=Y_{k}$ iff $X_{i:n}=X_{k}.$ The random variables
$Y_{[1:n]},Y_{[2:n]},...,Y_{[n:n]}$ are called concomitants of order
statistics $X_{1:n},X_{2:n},...,X_{n:n}.$ The theory of order statistics is
well documented in David (1981), David and Nagaraja (2003), Arnold et al.
(1992). \ The concomitants of order statistics are described in David (1973),
Bhattacharya (1974), \ David and Galambos \ (1974), David and Nagaraja (1998),
Wang (2008). Denote by $F_{i:n}(x,y)$ the joint distribution of order
statistic $X_{i:n}$ and its concomitant $Y_{[i:n]}.$

Let $X_{1},X_{2},...,X_{n}$ be a univariate sample with cdf $F(x)$ and
$F_{i:n}(x)=P\{X_{i:n}\leq x\},$ where $X_{i:n}$ is the $i$th order statistic
of this sample. Recently, Bairamov (2011) considered mixtures of distribution
functions of order statistics $K_{n}(x):=%
{\displaystyle\sum\limits_{i=1}^{n}}
p_{i}F_{i:n}(x)$ and $H_{n}(x):=%
{\displaystyle\sum\limits_{i=1}^{n}}
p_{i}F_{n-i+1:n}(x)$ and using inequalities of majorization theory showed that
for a particular choice of $p_{i}$'s, $\ H_{n}(x)\leq F(x)\leq K_{n}(x)$ for
all $x\in%
\mathbb{R}
$ and the $L_{2}$ distance between $H_{n}(x)$ and $K_{n}(x)$ can be made
sufficiently small. In other words there exists a sequence $(p_{1}%
,p_{2},...,p_{n})=($ $p_{1}(m),p_{2}(m),...,p_{n}(m))$ such that for this
sequence $H_{n}(x)$ and $K_{n}(x)$ converge to $F(x)$ as $m\rightarrow\infty$
with rate $o(1/x^{1+\alpha}),$ $0<\alpha<1$ and the $L_{1}$ distance between
$H_{n}(x)$ and $K_{n}(x)$ can be made as small as required. $\ $

In this paper we extend the results presented in Bairamov (2011) to the
bivariate case. We consider mixtures of joint distribution functions of order
statistics and concomitants $K_{n}(x,y):=%
{\displaystyle\sum\limits_{i=1}^{n}}
p_{i}F_{i:n}(x,y)$ and $H_{n}(x,y):=%
{\displaystyle\sum\limits_{i=1}^{n}}
p_{i}F_{n-i+1:n}(x,y).$ \ Using majorization inequalities it is shown that for
a particular sequence $(p_{1},p_{2},...,p_{n})$ $=(p_{1}(m),p_{2}%
(m),...,p_{n}(m)),$ $m=1,2,...$, $\ $it is true that $H_{n}(x,y)\leq
F(x,y)\leq K_{n}(x,y)$ for all $(x,y)\in%
\mathbb{R}
$ and the distance between $H_{n}(x,y)$ and $K_{n}(x,y)$ goes to zero as
$m\rightarrow\infty.$ \ 

\section{Auxiliary Results}

Let $\ \mathbf{a=(}a_{1},a_{2},...,a_{n})\in%
\mathbb{R}
^{n}$ , $\mathbf{b}=(b_{1},b_{2},...,b_{n})\in%
\mathbb{R}
^{n}$ and $a_{[1]}\geq a_{[2]}\geq\cdots\geq a_{[n]}$ denote the components of
$\mathbf{a}$ in decreasing order. The vector $\mathbf{a}$ is said to be
majorized by the vector $\mathbf{b}$ and $\text{denoted by }\mathbf{a}%
\prec\mathbf{b},$ if
\[
\sum_{i=1}^{k}a_{[i]}\leq\sum_{i=1}^{k}b_{[i]}\text{ for }k=1,2,\cdots,n-1
\]
and%
\[
\sum_{i=1}^{n}a_{[i]}=\sum_{i=1}^{n}b_{[i].}%
\]
The details of the theory of majorization can be found in Marshall et al.
(2011). The following two theorems are important for our study.

\begin{proposition}
\label{Proposition 1} Denote $D=\{(x_{1},x_{2},...,x_{n}):x_{1}\geq x_{2}%
\geq\cdots\geq x_{n}\},$ $\mathbf{a=(}a_{1},a_{2},...,a_{n}),$ $\mathbf{b}$
$=(b_{1},b_{2},...,b_{n}).$ The inequality
\[
\sum_{i=1}^{n}a_{i}x_{i}\leq\sum_{i=1}^{n}b_{i}x_{i}%
\]
holds for all $(x_{1},x_{2},...,x_{n})\in D$ if and only if $\mathbf{a}%
\prec\mathbf{b}$ in $D.$(Marshal et al. 2011, page 160).
\end{proposition}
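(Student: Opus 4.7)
The plan is to treat the two directions separately and, since both $\mathbf{a}$ and $\mathbf{b}$ lie in $D$, to exploit that $a_{[i]}=a_i$ and $b_{[i]}=b_i$, so the majorization condition reduces to the partial-sum inequalities
\[
\sum_{i=1}^{k}a_i\leq\sum_{i=1}^{k}b_i,\quad k=1,\ldots,n-1,\qquad \sum_{i=1}^{n}a_i=\sum_{i=1}^{n}b_i.
\]
I would introduce the cumulative differences $S_k=\sum_{i=1}^{k}(b_i-a_i)$ with the convention $S_0=0$, so that majorization is equivalent to $S_k\geq 0$ for all $k<n$ together with $S_n=0$.

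For the sufficiency direction ($\mathbf{a}\prec\mathbf{b}$ implies the linear inequality), I would apply Abel summation (summation by parts) to rewrite
\[
\sum_{i=1}^{n}(b_i-a_i)x_i = \sum_{i=1}^{n-1}S_i(x_i-x_{i+1}) + S_n x_n.
\]
The boundary term vanishes because $S_n=0$; each factor $S_i$ is nonnegative by the majorization hypothesis, and each factor $x_i-x_{i+1}$ is nonnegative because $\mathbf{x}\in D$. Hence the right-hand side is nonnegative, giving $\sum a_i x_i\leq\sum b_i x_i$.

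For the necessity direction, I would construct explicit test vectors in $D$ that isolate each majorization inequality. Specifically, for each $k\in\{1,\ldots,n-1\}$, the indicator-type vector $\mathbf{x}^{(k)}=(\underbrace{1,\ldots,1}_{k},0,\ldots,0)$ lies in $D$, and plugging it into the hypothesis yields exactly $\sum_{i=1}^{k}a_i\leq\sum_{i=1}^{k}b_i$. To secure the equality of full sums I would use $\mathbf{x}=(1,\ldots,1)$ and $\mathbf{x}=(-1,\ldots,-1)$, both of which lie in $D$ and yield the two opposing inequalities $\sum a_i\leq\sum b_i$ and $\sum a_i\geq\sum b_i$.

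There is no serious obstacle here, since this is a classical characterization; the mild technical point is to make sure the Abel-summation bookkeeping is done correctly and that the test vectors used to invert the implication are genuinely in $D$. Because negative coordinates are allowed (so $(-1,\ldots,-1)\in D$), the equality of sums comes for free from testing in both signs, avoiding any need for an approximation argument.
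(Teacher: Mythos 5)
The paper offers no proof of this proposition at all --- it is simply quoted from Marshall, Olkin and Arnold (2011, p.\ 160) --- so your write-up is, in effect, supplying the textbook argument, and it is correct. The Abel-summation identity
\[
\sum_{i=1}^{n}(b_{i}-a_{i})x_{i}=\sum_{i=1}^{n-1}S_{i}(x_{i}-x_{i+1})+S_{n}x_{n},\qquad S_{0}=0,
\]
is right, and with $S_{k}\geq0$ for $k<n$, $S_{n}=0$ and $x_{i}-x_{i+1}\geq0$ it yields sufficiency; the test vectors $(1,\dots,1,0,\dots,0)$, $(1,\dots,1)$ and $(-1,\dots,-1)$ all belong to $D$ and yield necessity, so no approximation argument is needed. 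The one point you should make explicit rather than leave as an aside is your opening reduction: what the necessity direction actually delivers is the system of \emph{unsorted} partial-sum inequalities $\sum_{i=1}^{k}a_{i}\leq\sum_{i=1}^{k}b_{i}$ ($k<n$) with equality at $k=n$, and this coincides with majorization $\mathbf{a}\prec\mathbf{b}$ only under the standing hypothesis that $\mathbf{a}$ and $\mathbf{b}$ themselves lie in $D$, so that $a_{[i]}=a_{i}$ and $b_{[i]}=b_{i}$. That hypothesis is evidently what the statement means by ``$\mathbf{a}\prec\mathbf{b}$ in $D$'', and it is satisfied in the paper's application in Lemma \ref{Lemma 2} (there $\mathbf{a}=(\tfrac{1}{n},\dots,\tfrac{1}{n})$ and $\mathbf{b}=(p_{1},\dots,p_{n})$ is decreasing), but without it the equivalence as you phrased it would fail, since the unsorted partial-sum condition is weaker than majorization in general. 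With that hypothesis stated up front, your proof is complete and is essentially the standard one the cited reference gives.
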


\begin{proposition}
\bigskip\label{Proposition 2}The inequality
\[
\sum_{i=1}^{n}a_{i}x_{i}\leq\sum_{i=1}^{n}b_{i}x_{i}%
\]
holds whenever $x_{1}\leq x_{2}\leq\cdots\leq x_{n}$ if and only if
\begin{align*}%
{\displaystyle\sum\limits_{i=1}^{k}}
a_{i}  &  \geq%
{\displaystyle\sum\limits_{i=1}^{k}}
b_{i},\text{ }k=1,2,...,n-1\\%
{\displaystyle\sum\limits_{i=1}^{n}}
a_{i}  &  =%
{\displaystyle\sum\limits_{i=1}^{n}}
b_{i}.
\end{align*}

\end{proposition}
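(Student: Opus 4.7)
The plan is to reformulate the statement using the differences $c_i := b_i - a_i$ and then apply summation by parts (Abel's identity). The asserted inequality is equivalent to $\sum_{i=1}^{n} c_i x_i \geq 0$ for every nondecreasing tuple $x_1 \leq x_2 \leq \cdots \leq x_n$, and the two hypothesized conditions become $S_k := c_1 + \cdots + c_k \leq 0$ for $k = 1, \ldots, n-1$ and $S_n = 0$.

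For sufficiency, I would invoke Abel's summation formula
\[
\sum_{i=1}^{n} c_i x_i \;=\; S_n x_n \;+\; \sum_{k=1}^{n-1} S_k (x_k - x_{k+1}).
\]
Under the hypotheses the first term vanishes because $S_n = 0$, while each summand of the remaining sum is the product of two nonpositive numbers (since $S_k \leq 0$ and $x_k - x_{k+1} \leq 0$) and is therefore nonnegative. This immediately gives the desired inequality.

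For necessity, I would plug in carefully chosen nondecreasing test sequences. Taking $x_1 = \cdots = x_n = 1$ and then $x_1 = \cdots = x_n = -1$ (both admissible, as constant sequences are nondecreasing) forces $\sum a_i = \sum b_i$, i.e.\ $S_n = 0$. Next, for each fixed $k \in \{1, \ldots, n-1\}$, the step sequence $x_1 = \cdots = x_k = 0$, $x_{k+1} = \cdots = x_n = 1$ is nondecreasing and yields $\sum_{i=k+1}^{n} a_i \leq \sum_{i=k+1}^{n} b_i$; combined with the already-established equality $\sum a_i = \sum b_i$, this is exactly $\sum_{i=1}^{k} a_i \geq \sum_{i=1}^{k} b_i$.

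There is no serious obstacle here; the only subtlety is to notice why the final condition must be an equality rather than an inequality, which is precisely because the boundary term $S_n x_n$ in Abel's identity must vanish to accommodate test sequences of either sign (in particular, both $x_i \equiv 1$ and $x_i \equiv -1$ are valid). This is a completely standard argument, and its role here is to serve as a companion to Proposition~\ref{Proposition 1} that will be applied in the subsequent bivariate majorization bounds.
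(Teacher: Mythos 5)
Your proof is correct and complete. Note that the paper itself gives no argument for this statement: Proposition~\ref{Proposition 2} is quoted verbatim from Marshall, Olkin and Arnold (2011, p.~639), so there is no internal proof to compare against, and your write-up serves as a self-contained substitute. The sufficiency direction via Abel's identity
\[
\sum_{i=1}^{n} c_i x_i = S_n x_n + \sum_{k=1}^{n-1} S_k (x_k - x_{k+1}), \qquad c_i = b_i - a_i,\ S_k = c_1+\cdots+c_k,
\]
is exactly right: $S_n=0$ kills the boundary term and each remaining summand is a product of two nonpositive quantities. The necessity direction is also handled correctly with the constant test vectors $\pm(1,\dots,1)$ (forcing $S_n=0$) and the nondecreasing step vectors $(0,\dots,0,1,\dots,1)$ (forcing $S_k\le 0$ for $k<n$), which translate back to the stated partial-sum inequalities and the final equality. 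This is the standard argument for this classical fact, and it supplies precisely what the paper needs for the left-hand inequality in Lemma~\ref{Lemma 2}.
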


(Marshall et al. 2011, page 639).

\section{Main results in bivariate case}

Let $(X,Y)$ be absolutely continuous random vector with joint cdf $F(x,y)$ and
probability density function (pdf) $f(x,y)$. Let $(X_{1},Y_{1}),(X_{2}%
,Y_{2}),...,(X_{n},Y_{n})$ be independent copies of $(X,Y).$ Let $X_{r:n}$ be
the $r$th order statistic and $Y_{[r:n]}$ be its concomitant, i.e.
$Y_{[r:n]}=Y_{i}$ iff $X_{r:n}=X_{i}.$ \ The joint distribution of $X_{r:n}$
and $Y_{[r:n]}$ can be easily derived and it is%
\begin{align*}
F_{r:n}(x,y)  &  =B_{n}%
{\displaystyle\int\limits_{-\infty}^{x}}
F_{X}^{r-1}(u)(1-F_{X}(u))^{n-r}F(du,y)du\\
&  =B_{n}%
{\displaystyle\int\limits_{-\infty}^{x}}
F_{X}^{r-1}(u)(1-F_{X}(u))^{n-r}\left(
{\displaystyle\int\limits_{-\infty}^{y}}
f(u,v)dv\right)  du,
\end{align*}
where
\[
F(du,y)=\frac{\partial}{\partial u}F(u,v)=%
{\displaystyle\int\limits_{-\infty}^{y}}
f(u,v)dv,\text{ and }B_{n}=n\binom{n-1}{r-1}.
\]
It \ is easy to check that
\begin{equation}
\frac{1}{n}%
{\displaystyle\sum\limits_{r=1}^{n}}
F_{r:n}(x,y)=F(x,y). \label{d1}%
\end{equation}

\begin{lemma}
\bigskip\label{Lemma 1}$F_{r+1:n}(x,y)\leq F_{r:n}(x,y)$, $r=1,2,...,n-1,$ for
all $(x,y)\in%
\mathbb{R}
^{2}.$
\end{lemma}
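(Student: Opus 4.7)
The plan is to work directly from the integral representation of $F_{r:n}(x,y)$ displayed just before the lemma. I would form the difference $F_{r:n}(x,y) - F_{r+1:n}(x,y)$, combine the two integrands over a common interval, and try to put it in a form whose sign is manifest. Using the identities $n\binom{n-1}{r-1} = r\binom{n}{r}$ and $n\binom{n-1}{r} = (n-r)\binom{n}{r}$ and factoring the common positive factor $F_X^{r-1}(u)(1-F_X(u))^{n-r-1}g(u,y)$, where $g(u,y) := \int_{-\infty}^y f(u,v)\,dv \ge 0$, I would obtain
\[
F_{r:n}(x,y) - F_{r+1:n}(x,y) = \binom{n}{r}\int_{-\infty}^x F_X^{r-1}(u)(1-F_X(u))^{n-r-1}\bigl[r - nF_X(u)\bigr] g(u,y)\,du.
\]

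The bracket changes sign at $u^{*} = F_X^{-1}(r/n)$, so pointwise non-negativity is unavailable; the next step is to exploit the identity that $F_X^{r-1}(u)(1-F_X(u))^{n-r-1}[r-nF_X(u)]\,f_X(u)$ is precisely $\frac{d}{du}\bigl[F_X^{r}(u)(1-F_X(u))^{n-r}\bigr]$. Dividing numerator and denominator by $f_X(u)$ converts the remaining factor $g(u,y)/f_X(u)$ into the conditional cdf $F_{Y\mid X}(y\mid u)$, and an integration by parts then rewrites the difference as
\[
\binom{n}{r} F_X^{r}(x)\bigl(1-F_X(x)\bigr)^{n-r} F_{Y\mid X}(y\mid x) \; - \; \binom{n}{r}\int_{-\infty}^x F_X^{r}(u)\bigl(1-F_X(u)\bigr)^{n-r}\,\partial_u F_{Y\mid X}(y\mid u)\,du,
\]
with the boundary term at $-\infty$ vanishing since $F_X(-\infty)=0$. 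The first summand is manifestly non-negative, so everything reduces to controlling the integral.

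The main obstacle, I expect, is the sign of $\partial_u F_{Y\mid X}(y\mid u)$. When $F_{Y\mid X}(y\mid u)$ is non-increasing in $u$ — the usual positive regression dependence condition — the integral is non-positive and the lemma follows cleanly. In the absence of such a dependence assumption the two contributions can compete, and an unconditional proof looks delicate; I would therefore plan to either invoke a positive dependence hypothesis implicit in the setup or, failing that, try to recover the inequality by an averaging argument based on the identity $\frac{1}{n}\sum_{r=1}^n F_{r:n}(x,y)=F(x,y)$ chained pairwise across $r$. The decisive step is really the integration by parts, since it is what isolates the monotonicity of $F_{Y\mid X}(y\mid u)$ as the sole quantity one must sign.
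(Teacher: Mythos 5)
Your computation is correct as far as it goes --- the identities $n\binom{n-1}{r-1}=r\binom{n}{r}$ and $n\binom{n-1}{r}=(n-r)\binom{n}{r}$, the recognition of the perfect derivative $\frac{d}{du}\bigl[F_X^{r}(u)(1-F_X(u))^{n-r}\bigr]$, and the integration by parts are all sound --- but it does not prove the lemma as stated, because you only reach the conclusion under the extra hypothesis that $F_{Y\mid X}(y\mid u)$ is non-increasing in $u$ (positive regression dependence). That hypothesis cannot be dispensed with: the lemma as stated is false for a general absolutely continuous $F$. Take $n=2$, $X$ uniform on $(0,1)$ and $Y=-X+\varepsilon Z$ with $Z$ standard normal independent of $X$ and $\varepsilon$ small; then at $x=1$, $y=-1/2$ one has $F_{2:2}(x,y)\approx P\{X_{2:2}\ge 1/2\}=3/4$ while $F_{1:2}(x,y)\approx P\{X_{1:2}\ge 1/2\}=1/4$, so $F_{2:2}>F_{1:2}$ there. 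The same example rules out your proposed fallback: the identity $\frac{1}{n}\sum_{r=1}^{n}F_{r:n}(x,y)=F(x,y)$ is symmetric in $r$ and carries no information about how the individual terms are ordered, so no pairwise chaining can recover the monotonicity in $r$. In short, the ``delicate unconditional proof'' you were hoping for does not exist; your conditional argument (any hypothesis forcing $\partial_u F_{Y\mid X}(y\mid u)\le 0$, e.g.\ $Y$ stochastically increasing in $X$) is the correct version of the statement.

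For comparison, the paper's own proof takes a different route but does not succeed either. It substitutes $t=F_X(u)$ to write the difference as $\int_0^{F_X(x)}h(t)g(t)\,dt$ with $h(t)=\binom{n}{r}t^{r-1}(1-t)^{n-r-1}(nt-r)$ and $g(t)=\int_{-\infty}^{y}f(F_X^{-1}(t),v)\,dv\ge 0$, and then invokes the first mean value theorem for integrals to pull out a factor $g(\xi_x)\ge 0$, reducing everything to the univariate inequality $P\{X_{r+1:n}\le x\}\le P\{X_{r:n}\le x\}$. That application is invalid: the mean value theorem in this form requires the factor left under the integral sign --- here $h$ --- to keep a constant sign, and $h$ changes sign at $t=r/n$; it is $g$, not $h$, that is one-signed, and the legitimate application would pull out $h(\xi)$, whose sign is indeterminate. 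So your instinct that the crux is the monotonicity of $F_{Y\mid X}(y\mid\cdot)$ is exactly right: the honest resolution is to add that positive dependence assumption to the lemma (and to the results built on it), which is precisely what your integration-by-parts argument delivers.
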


\begin{proof}
We have
\begin{align}
&  F_{r+1:n}(x,y)-F_{r:n}(x,y)\nonumber\\
&  =n\binom{n-1}{r}%
{\displaystyle\int\limits_{-\infty}^{x}}
F_{X}^{r}(u)(1-F_{X}(u))^{n-r-1}F(du,y)du\nonumber\\
&  -n\binom{n-1}{r-1}%
{\displaystyle\int\limits_{-\infty}^{x}}
F_{X}^{r-1}(u)(1-F_{X}(u))^{n-r}F(du,y)du\nonumber\\
&  =%
{\displaystyle\int\limits_{-\infty}^{x}}
\left[  n\binom{n-1}{r}F_{X}^{r}(u)(1-F_{X}(u))^{n-r-1}-\right. \nonumber\\
&  \left.  n\binom{n-1}{r-1}F_{X}^{r-1}(u)(1-F_{X}(u))^{n-r}\right]
F(du,y)du\nonumber\\
&  =%
{\displaystyle\int\limits_{0}^{F_{X}(x)}}
\left[  n\binom{n-1}{r}t^{r}(1-t)^{n-r-1}\right. \label{d2}\\
&  \left.  -n\binom{n-1}{r-1}t^{r-1}(1-t)^{n-r}\right]  F(dF_{X}%
^{-1}(t),y)dt,\nonumber
\end{align}
where
\[
F(dF_{X}^{-1}(t),y)=%
{\displaystyle\int\limits_{-\infty}^{y}}
f(F_{X}^{-1}(t),v)dv.
\]
Since
\[
h(t):=n\binom{n-1}{r}t^{r}(1-t)^{n-r-1}-n\binom{n-1}{r-1}t^{r-1}%
(1-t)^{n-r}\text{ and }g(t):=F(dF_{X}^{-1}(t),y)
\]
are both bounded integrable functions in $t\in\lbrack0,F_{X}^{-1}(x)],$ for
all $x,y\in%
\mathbb{R}
$ and $g(x)$ is one sided function in this interval, then by the first mean
value theorem for integral (see Gradshteyn and Ryzhik (2007 ), 12.1, page
1053)
\[%
{\displaystyle\int\limits_{0}^{F_{X}(x)}}
h(t)g(t)dt=g(\xi)%
{\displaystyle\int\limits_{0}^{F_{X}(x)}}
h(t)dt,
\]
where $0\leq\xi_{x}\leq F_{X}(x),$ $x\in%
\mathbb{R}
,$ $g(\xi_{x})\geq0.$ The last equality together with (\ref{d2}) leads to%
\begin{align*}
&  F_{r+1:n}(x,y)-F_{r:n}(x,y)\\
&  =g(\xi_{x})%
{\displaystyle\int\limits_{0}^{F_{X}(x)}}
\left[  n\binom{n-1}{r}t^{r}(1-t)^{n-r-1}-n\binom{n-1}{r-1}t^{r-1}%
(1-t)^{n-r}\right]  dt\\
&  =g(\xi_{x})\left\{
{\displaystyle\int\limits_{0}^{F_{X}(x)}}
n\binom{n-1}{r}t^{r}(1-t)^{n-r-1}-%
{\displaystyle\int\limits_{0}^{F_{X}(x)}}
\frac{n!}{(r-1)!(n-r)!}t^{r-1}(1-t)^{n-r}dt\right\} \\
&  =g(\xi_{x})[P\{X_{r+1:n}\leq x\}-P\{X_{r:n}\leq x\}]\leq0.
\end{align*}

\end{proof}

Denote
\[
D_{+}^{1}=\{(x_{1},x_{2},...,x_{n}):x_{i}\geq0,i=1,2,...,n;\text{ }x_{1}\geq
x_{2}\geq\cdots\geq x_{n},\sum_{i=1}^{n}x_{i}=1\}.
\]

\begin{lemma}
\label{Lemma 2}Let $(p_{1},p_{2},...,p_{n})\in D_{+}^{1}$ . Then
\begin{align}
H_{n}(x,y)  & \equiv%
{\displaystyle\sum\limits_{i=1}^{n}}
p_{i}F_{n-i+1:n}(x,y)\leq F(x,y)\leq%
{\displaystyle\sum\limits_{i=1}^{n}}
p_{i}F_{i:n}(x,y)\equiv K_{n}(x,y)\text{ }\label{b5}\\
\text{for all (}x,y)  & \in%
\mathbb{R}
^{2}\nonumber
\end{align}
and the equality holds if and only if $(p_{1},p_{2},\cdots,p_{n})=(\frac{1}%
{n},\frac{1}{n},...,\frac{1}{n}).$ Furthermore, if \ $\mathbf{p}=(p_{1}%
,p_{2},...,p_{n})\in D_{+}^{1},$ $\mathbf{q=}(q_{1},q_{2},...,q_{n})\in
D_{+}^{1}$ and $\mathbf{p}\prec\mathbf{q,}$ then
\begin{align}%
{\displaystyle\sum\limits_{i=1}^{n}}
q_{i}F_{n-i+1:n}(x,y)  & \leq%
{\displaystyle\sum\limits_{i=1}^{n}}
p_{i}F_{n-i+1:n}(x,y)\leq F(x,y)\leq%
{\displaystyle\sum\limits_{i=1}^{n}}
p_{i}F_{i:n}(x,y)\label{b6}\\
& \leq%
{\displaystyle\sum\limits_{i=1}^{n}}
q_{i}F_{i:n}(x,y).\nonumber
\end{align}

\end{lemma}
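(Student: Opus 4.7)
The strategy is to combine the monotonicity given by Lemma \ref{Lemma 1} with the two majorization-type inequalities (Propositions \ref{Proposition 1} and \ref{Proposition 2}), taking the identity (\ref{d1}) as the pivot that represents $F(x,y)$ as the uniformly weighted mixture $\sum_{i=1}^{n}\frac{1}{n}F_{i:n}(x,y)$.

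First I would record the monotonicity consequences of Lemma \ref{Lemma 1}: for each fixed $(x,y)$ the sequence $\bigl(F_{1:n}(x,y),F_{2:n}(x,y),\dots,F_{n:n}(x,y)\bigr)$ lies in $D$, i.e.\ is nonincreasing in $i$, while the reversed sequence $x_{i}:=F_{n-i+1:n}(x,y)$ is nondecreasing in $i$. Next I would check the elementary majorization fact that $(1/n,\dots,1/n)\prec(p_{1},\dots,p_{n})$ for every $\mathbf{p}\in D_{+}^{1}$: because the $p_{i}$ are already in decreasing order and average $1/n$, we have $\sum_{i=1}^{k}p_{i}\geq k/n$ for $k=1,\dots,n-1$, with equality at $k=n$.

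For the upper bound $F(x,y)\leq K_{n}(x,y)$ I would apply Proposition \ref{Proposition 1} with $a_{i}=1/n$, $b_{i}=p_{i}$ and $x_{i}=F_{i:n}(x,y)\in D$; using (\ref{d1}) on the left-hand side gives the claim. For the lower bound $H_{n}(x,y)\leq F(x,y)$ I would apply Proposition \ref{Proposition 2} with $a_{i}=p_{i}$, $b_{i}=1/n$ and $x_{i}=F_{n-i+1:n}(x,y)$ (increasing in $i$); the required partial-sum condition $\sum_{i=1}^{k}p_{i}\geq k/n$ is exactly what I verified above, and reindexing $\sum_{i=1}^{n}\frac{1}{n}F_{n-i+1:n}(x,y)=\sum_{r=1}^{n}\frac{1}{n}F_{r:n}(x,y)=F(x,y)$ closes the inequality. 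The equality characterization follows from the equality cases of these propositions together with the observation that the functions $F_{i:n}(x,y)$ are not identically equal as $(x,y)$ varies (cf.\ the strict inequality supplied by Lemma \ref{Lemma 1} on a nonempty set of $(x,y)$), so identical $\sum p_{i}F_{i:n}=\sum\frac{1}{n}F_{i:n}$ on all of $\mathbb{R}^{2}$ forces $\mathbf{p}=(1/n,\dots,1/n)$.

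Finally, for the sandwich (\ref{b6}) with $\mathbf{p}\prec\mathbf{q}$, the two outer inequalities are obtained by the same mechanism, now comparing $\mathbf{p}$ and $\mathbf{q}$ directly instead of $(1/n,\dots,1/n)$ and $\mathbf{p}$. Proposition \ref{Proposition 1}, applied to the decreasing sequence $F_{i:n}(x,y)$ with $(a,b)=(\mathbf{p},\mathbf{q})$, yields $\sum p_{i}F_{i:n}\leq\sum q_{i}F_{i:n}$. For the reversed end, note that since both vectors are already decreasingly ordered, $\mathbf{p}\prec\mathbf{q}$ is equivalent to $\sum_{i=1}^{k}q_{i}\geq\sum_{i=1}^{k}p_{i}$ for all $k\leq n-1$ with equality at $k=n$; Proposition \ref{Proposition 2} with $(a,b)=(\mathbf{q},\mathbf{p})$ and $x_{i}=F_{n-i+1:n}(x,y)$ then gives $\sum q_{i}F_{n-i+1:n}\leq\sum p_{i}F_{n-i+1:n}$. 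The middle two inequalities in (\ref{b6}) are just (\ref{b5}) applied to $\mathbf{p}$.

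I do not anticipate a serious obstacle: the proof is essentially bookkeeping built on Lemma \ref{Lemma 1} and the two propositions. The only place requiring care is respecting the direction-swap between Proposition \ref{Proposition 1} (applied to decreasing $x_{i}$, inequality on $\mathbf{a}\prec\mathbf{b}$) and Proposition \ref{Proposition 2} (applied to increasing $x_{i}$, inequality on partial sums reversed); matching these consistently to the two cases where $F_{i:n}(x,y)$ is used once in the natural order and once reindexed is the subtle part of the argument.
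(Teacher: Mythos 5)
Your proposal is correct and follows essentially the same route as the paper: Lemma \ref{Lemma 1} gives the decreasing ordering of $F_{i:n}(x,y)$, the majorization $(\tfrac{1}{n},\dots,\tfrac{1}{n})\prec\mathbf{p}$ feeds Proposition \ref{Proposition 1} for the upper bound and Proposition \ref{Proposition 2} (with the reversed, increasing sequence) for the lower bound, and the same mechanism with $(\mathbf{p},\mathbf{q})$ gives (\ref{b6}). In fact you spell out the bookkeeping (the partial-sum verification, the reindexing via (\ref{d1}), the equality case, and the $\mathbf{p}\prec\mathbf{q}$ comparison) more explicitly than the paper's terse proof does.
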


\begin{proof}
By \ Lemma 1 $F_{1:n}(x,y)$ $\geq F_{2:n}(x,y)\geq\cdots\geq F_{n:n}(x,y)$ for
all $x\in%
\mathbb{R}
,$ and $(\frac{1}{n},\frac{1}{n},...,\frac{1}{n})\prec(p_{1},p_{2}%
,...,p_{n}),$ the right hand side of the inequality (\ref{b5}) follows from
the Proposition 1 and left hand side follows from Proposition 2.
\end{proof}

\begin{theorem}
Let $p_{i}(m)=\frac{m+n-i+1}{a_{n}(m)},$ $i=1,2,...,n;$ $\ \ m\in
\{0,1,2,...\},$ where $a_{n}(m)=nm+\frac{n(n+1)}{2}.$ Then
\begin{align}
H_{n}^{(m)}(x,y)  & \equiv%
{\displaystyle\sum\limits_{i=1}^{n}}
p_{i}(m)F_{n-i+1:n}(x,y)\leq F(x,y)\label{b7}\\
& \leq%
{\displaystyle\sum\limits_{i=1}^{n}}
p_{i}(m)F_{i:n}(x,y)\equiv K_{n}^{(m)}(x,y)\text{ for all }(x,y)\in%
\mathbb{R}
^{2}\nonumber
\end{align}
and%
\begin{align}
\underset{m\rightarrow\infty}{\ \lim}%
{\displaystyle\sum\limits_{i=1}^{n}}
p_{i}(m)F_{n-i+1:n}(x,y)  & =\underset{m\rightarrow\infty}{\ \lim}%
{\displaystyle\sum\limits_{i=1}^{n}}
p_{i}(m)F_{i:n}(x,y)\label{b8}\\
& =F(x,y)\text{ for all }(x,y)\in%
\mathbb{R}
^{2}.\nonumber
\end{align}
Furthermore,
\begin{equation}%
{\displaystyle\int\limits_{-\infty}^{\infty}}
{\displaystyle\int\limits_{-\infty}^{\infty}}
\left\vert K_{n}^{(m)}(x,y)-H_{n}^{(m)}(x,y)\right\vert dxdy=o(\frac
{1}{m^{1-\alpha}}),\text{ }0<\alpha<1.\label{b9}%
\end{equation}

\end{theorem}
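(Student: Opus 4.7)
The plan is to verify the three assertions of the theorem in turn: membership of $\mathbf{p}(m)=(p_1(m),\ldots,p_n(m))$ in $D_+^1$ (which yields (\ref{b7}) via Lemma \ref{Lemma 2}), the pointwise limit (\ref{b8}), and finally the $L_1$ rate (\ref{b9}).

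First I would check $\mathbf{p}(m)\in D_+^1$. Positivity and the ordering $p_1(m)\geq\cdots\geq p_n(m)$ are immediate since the numerators $m+n-i+1$ are positive and strictly decreasing in $i$; the normalization follows from the identity $\sum_{i=1}^{n}(m+n-i+1)=nm+\tfrac{n(n+1)}{2}=a_n(m)$. Lemma \ref{Lemma 2} then delivers (\ref{b7}). For (\ref{b8}), observe $p_i(m)=\tfrac{1}{n}+O(1/m)$, so each of the finitely many summands converges, and both $K_n^{(m)}$ and $H_n^{(m)}$ tend pointwise to $\tfrac{1}{n}\sum_{i=1}^{n}F_{i:n}(x,y)$, which equals $F(x,y)$ by the identity (\ref{d1}).

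For the rate (\ref{b9}), I would first exploit the pleasant symmetry $p_i(m)+p_{n-i+1}(m)=\tfrac{2m+n+1}{a_n(m)}=\tfrac{2}{n}$, which upon reindexing gives $K_n^{(m)}+H_n^{(m)}=\tfrac{2}{n}\sum F_{i:n}=2F$; thus $F$ sits exactly at the midpoint of the sandwich and $K_n^{(m)}-H_n^{(m)}=2(K_n^{(m)}-F)$. A direct computation using $p_i(m)-p_{n-i+1}(m)=\tfrac{n-2i+1}{a_n(m)}$ then yields
\[
K_n^{(m)}(x,y)-H_n^{(m)}(x,y)=\frac{1}{a_n(m)}\sum_{i=1}^{n}(n-2i+1)\,F_{i:n}(x,y).
\]
Since $\sum_{i=1}^{n}(n-2i+1)=0$, this can be rewritten as $\frac{1}{a_n(m)}\sum(n-2i+1)[F_{i:n}(x,y)-F(x,y)]$, and the prefactor is $O(1/m)$. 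The $L_1$ bound will then be obtained by truncating the plane into $[-M_m,M_m]^2$ and its complement with $M_m$ chosen polynomially in $m$, say $M_m\sim m^{\alpha/2}$: inside the square the integrand is uniformly $O(1/m)$, contributing $O(M_m^2/m)=O(m^{\alpha-1})$, which is consistent with the claimed rate.

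The main obstacle is the tail estimate outside $[-M_m,M_m]^2$. In the univariate case of Bairamov (2011) this reduces to the one-coordinate decay of $F_{X_{i:n}}(x)\to 1$ at the rate $o(1/x^{1+\alpha})$, but in the bivariate setting $F_{i:n}(x,y)$ only stabilizes when both coordinates diverge. I would therefore split the complement into the two strips where only one coordinate exceeds $M_m$ and treat each strip by freezing the bounded coordinate and invoking the univariate estimate of Bairamov (2011) applied either to the $X$-marginal or to the concomitant distribution $F_{Y_{[i:n]}}$, so that the strip contributions are again $o(m^{\alpha-1})$. Balancing the inside and outside contributions through the choice of $M_m$ should then yield (\ref{b9}).
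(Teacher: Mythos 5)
Your handling of (\ref{b7}) and (\ref{b8}) is correct and coincides with the paper's argument: verify $(p_1(m),\ldots,p_n(m))\in D_+^1$, invoke Lemma \ref{Lemma 2}, and then let $m\to\infty$ using $p_i(m)\to\frac1n$ together with the identity (\ref{d1}). The symmetry $p_i(m)+p_{n-i+1}(m)=\frac{2}{n}$, hence $K_n^{(m)}+H_n^{(m)}=2F$, and the exact formula $K_n^{(m)}-H_n^{(m)}=\frac{1}{a_n(m)}\sum_{i=1}^n(n-2i+1)F_{i:n}(x,y)$ are correct and are nice additions not in the paper. (Minor point: with $M_m\sim m^{\alpha/2}$ your interior bound is $O(m^{\alpha-1})$, not $o(m^{\alpha-1})$; you would need $M_m=m^{\alpha'/2}$ with $\alpha'<\alpha$.)

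The genuine gap is the tail estimate for (\ref{b9}), and the repair you sketch cannot work. On the strip where $y>M_m$ and $x$ ranges over a fixed interval with $0<F_X(x)<1$, the integrand tends, as $y\to\infty$, to $\frac{1}{a_n(m)}\sum_{i=1}^n(n-2i+1)F_{X_{i:n}}(x)$, which is strictly positive: the weights $n-2i+1$ and the cdf values $F_{X_{i:n}}(x)$ are both decreasing in $i$ and the weights sum to zero, so this is a positive Chebyshev-type sum. Similarly, as $x\to\infty$ with $y$ fixed the integrand tends to $\frac{1}{a_n(m)}\sum_{i=1}^n(n-2i+1)P\{Y_{[i:n]}\le y\}$, which is nonzero in general (e.g.\ for $Y=X$). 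Thus on the strips the integrand is of exact order $1/m$ and exhibits no decay in the unbounded coordinate; integrating over that coordinate gives a divergent contribution no matter how $M_m$ is chosen, and freezing the bounded coordinate and citing the univariate rate of Bairamov (2011) cannot manufacture decay in the other coordinate, so the claim that the strip contributions are $o(m^{\alpha-1})$ fails. For comparison, the paper proves (\ref{b9}) without any truncation: it writes $K_n^{(m)}-H_n^{(m)}=\sum_i(p_i(m)-\frac1n)(F_{i:n}-F_{n-i+1:n})$ and bounds the $L_1$ distance by $(p_1(m)-\frac1n)\,c_n=O(1/m)$, where $c_n=\sum_i\int\int|F_{i:n}(x,y)-F_{n-i+1:n}(x,y)|\,dx\,dy$ is treated as a finite constant. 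Note, however, that the finiteness of $c_n$ is exactly the strip phenomenon you ran into, and it fails for dependent $(X,Y)$ in general; so your instinct that the bivariate tails are the crux is sound, but closing the argument requires either an additional integrability/structure hypothesis or restricting the integral in (\ref{b9}) to a bounded region, not the strip estimate you propose.
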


\begin{proof}
\bigskip Consider $p_{i}(m)=\frac{m+n-i+1}{a_{n}(m)},$ $i=1,2,...,n;$
$\ \ m\in\{0,1,2,...\},$ where $a_{n}(m)=nm+\frac{n(n+1)}{2}.$ It is clear
that $p_{1}(m)\geq p_{2}(m)\geq\cdots\geq p_{n}(m)$ and $%
{\displaystyle\sum\limits_{i=1}^{n}}
p_{i}(m)=1.$ Since $(\frac{1}{n},\frac{1}{n},...,\frac{1}{n})\prec
(p_{1}(m),p_{2}(m),...,p_{n}(m))$ then from Lemma 1 we have
\begin{equation}%
{\displaystyle\sum\limits_{i=1}^{n}}
p_{i}(m)F_{n-i+1:n}(x,y)\leq F(x,y)\leq%
{\displaystyle\sum\limits_{i=1}^{n}}
p_{i}(m)F_{i:n}(x,y). \label{3b}%
\end{equation}
Since
\[
\underset{m\rightarrow\infty}{\lim}p_{i}(m)=\underset{m\rightarrow\infty}%
{\lim}\frac{m+i}{nm+\frac{n(n+1)}{2}}=\frac{1}{n},\text{ }i=1,2,...,n,
\]
(\ref{b8}) follows. To prove (\ref{b9}) consider the $L_{1}$ distance between
$K_{n}^{(m)}(x,y)$ and $H_{n}^{(m)}(x,y).$ We have
\begin{align*}
\Delta_{m}  &  \equiv%
{\displaystyle\int\limits_{-\infty}^{\infty}}
{\displaystyle\int\limits_{-\infty}^{\infty}}
\left\vert K_{n}^{(m)}(x,y)-H_{n}^{(m)}(x,y)\right\vert dxdy\\
&  =%
{\displaystyle\int\limits_{-\infty}^{\infty}}
{\displaystyle\int\limits_{-\infty}^{\infty}}
\left\vert
{\displaystyle\sum\limits_{i=1}^{n}}
p_{i}(m)F_{i:n}(x,y)-%
{\displaystyle\sum\limits_{i=1}^{n}}
p_{i}(m)F_{n-i+1:n}(x,y)\right\vert dxdy\\
&  =%
{\displaystyle\int\limits_{-\infty}^{\infty}}
{\displaystyle\int\limits_{-\infty}^{\infty}}
\left\vert
{\displaystyle\sum\limits_{i=1}^{n}}
p_{i}(m)F_{i:n}(x,y)-F(x,y)+F(x,y)-%
{\displaystyle\sum\limits_{i=1}^{n}}
p_{i}(m)F_{n-i+1:n}(x,y)\right\vert dxdy\\
&  =%
{\displaystyle\int\limits_{-\infty}^{\infty}}
{\displaystyle\int\limits_{-\infty}^{\infty}}
\left\vert
{\displaystyle\sum\limits_{i=1}^{n}}
(p_{i}(m)-\frac{1}{n})F_{i:n}(x,y)+(\frac{1}{n}-p_{i}(m))F_{n-i+1:n}%
(x,y)\right\vert dxdy\\
&  \leq%
{\displaystyle\sum\limits_{i=1}^{n}}
\left\vert p_{i}(m)-\frac{1}{n}\right\vert
{\displaystyle\int\limits_{-\infty}^{\infty}}
{\displaystyle\int\limits_{-\infty}^{\infty}}
\left\vert F_{i:n}(x,y)-F_{n-i+1:n}(x,y)\right\vert dxdy\\
&  \leq(p_{1}(m)-\frac{1}{n})c_{n}=\frac{\frac{1}{m}\frac{n(n-1)}{2}}%
{n^{2}+\frac{n^{2}(n+1)}{2}\frac{1}{m}}c_{n},
\end{align*}
where $c_{n}=%
{\displaystyle\sum\limits_{i=1}^{n}}
{\displaystyle\int\limits_{-\infty}^{\infty}}
{\displaystyle\int\limits_{-\infty}^{\infty}}
\left\vert F_{i:n}(x,y)-F_{n-i+1:n}(x,y)\right\vert dxdy.$
\end{proof}


\begin{thebibliography}{99}                                                                                               %


\bibitem {}Arnold, B., Balakrishnan, N. and Nagaraja, H.N. (1992) \textit{A
First Course in Order Statistics.} John Wiley \ \& Sons, Inc.

\bibitem {}Bairamov, I. (20011) Majorization bounds for distribution function.
arXiv:1109.0141v1 [math.ST]

\bibitem {}David, H. A. (1973) Concomitants of Order Statistics.
\textit{Bulletin of International Statistical Institute}, 45, 295--300.

\bibitem {}David, H. A. and Galambos, J. (1974) The Asymptotic Theory of
Concomitants of Order Statistics. \textit{Journal of Applied Probability}, 11, 762--770.

\bibitem {}David, H. (1981) \textit{Order Statistics.} Second Edition, Wiley,
New York.

\bibitem {}David, H.A. and Nagaraja, H.N. (2003) \textit{Order Statistics,}
Third Edition, Wiley, New York.

\bibitem {}David, H. A. and Nagaraja, H. N. (1998) Concomitants of Order
Statistics, \ in \textit{Order Statistics: Theory \& Methods}, eds.
Balakrishnan, N. and Rao, C. R., Elsevier, Amsterdam, pp. 487--513.

\bibitem {}Barlow, R.E. and Proschan, F. (1975) \textit{Statistical Theory of
Reliability and Life Testing. Holt,} Rinehart and Winston, Inc.

\bibitem {}Gradshteyn, I.S., Ryzhik, I.M. (2007) \textit{Table of Integrals,
Series, and Products}. Seventh Edition. Editor A. Jefrey, Associate Editor D. Zwillinger.

\bibitem {}Marshal, A. W. , Olkin, I. and Arnold, B.C. (2011)
\textit{Inequalities: Theory of Majorization and Its Applications. }Second
edition.\textit{\ }Springer.

\bibitem {}Samaniego, F.J. (2007) \textit{System Signatures and their
Applications in Engineering Reliability}. Springer.

\bibitem {}Wang, Ke. (2008) \textit{On concomitants of order statistics}.
Dissertation, The Ohio State University.
\end{thebibliography}
\end{document}